\documentclass[12pt,amscd]{amsart}
\usepackage[pagebackref=true,colorlinks=true,linkcolor=blue,citecolor=blue]{hyperref}

\newtheorem{thm}{Theorem}[section]
\usepackage[all]{xy}
\usepackage{graphicx}
\usepackage{amsmath,amsxtra,amssymb,latexsym, amscd,amsthm}
\usepackage{indentfirst}
\usepackage[mathscr]{eucal}
\usepackage{hyperref}
\hypersetup{colorlinks=true,linkcolor=red,filecolor=magenta,urlcolor=green}

\newtheorem{cor}[thm]{Corollary}
\newtheorem{lem}[thm]{Lemma}

\theoremstyle{definition}
\newtheorem{defn}[thm]{Definition}
 
\newtheorem{rem}[thm]{Remark}

\DeclareMathOperator{\Adm}{Adm}

\DeclareMathOperator{\depth}{depth}

\DeclareMathOperator{\supp}{supp}

\DeclareMathOperator{\reg}{reg}
\DeclareMathOperator{\pd}{pd}
\DeclareMathOperator{\dst}{dstab}

\def\ab {\mathbf a}

\begin{document}

\title[Depth of symbolic powers of cover ideals]{Admissible subgraphs and the depth  of symbolic powers of cover ideals of graphs}

\author[T. D. Dung]{Tran Duc Dung}
	\address{Thai Nguyen University of Sciences, Phan Dinh phung Ward, Thai Nguyen, Vietnam}
	\email{dungtd@tnus.edu.vn}

\author[N.T. Hang]{Nguyen Thu Hang }
\address{Thai Nguyen University of Sciences, Phan Dinh phung Ward, Thai Nguyen, Vietnam}
\email{hangnt@tnus.edu.vn}

\author{Thanh Vu}
\address{Institute of Mathematics, VAST, 18 Hoang Quoc Viet, Hanoi, Vietnam}
\email{vuqthanh@gmail.com}

\begin{abstract} Let $G$ be a simple graph. We introduce the notion of $t$-admissible subgraphs of $G$ and show how to use them to compute the depth of the $t$-th symbolic powers of the cover ideal of $G$. As an application, we prove that
\[
\depth\big(S/J(C_n)^{(t)}\big) = n - 1 - \left\lfloor \frac{tn}{2t+1} \right\rfloor
\]
for all $t \ge 2$ and $n \ge 3$, where $S = K[x_1,\ldots,x_n]$ and $J(C_n)$ is the cover ideal of the cycle on $n$ vertices.
\end{abstract}

\maketitle
\section{Introduction}
Let \( S = K[x_1, \ldots, x_n] \) be a polynomial ring over a field \( K \), and let \( I \) be a homogeneous ideal of \( S \). A classical theorem of Brodmann~\cite{Br} asserts that the sequence \( \depth(S/I^t) \) is eventually constant for all sufficiently large \( t \). The least positive integer \( t_0 \) such that
\[
\depth(S/I^t) = \depth(S/I^{t_0})
\quad \text{for all } t \geq t_0
\]
is called the \emph{index of depth stability} of \( I \) and is denoted by \( \dst(I) \).

When \( I \) is a squarefree monomial ideal, a result of Hoa and Trung~\cite{HT} shows that the symbolic depth function \( \depth(S/I^{(t)}) \) also stabilizes. The smallest index at which it stabilizes is called the \emph{index of symbolic depth stability} of \( I \). 

Given a squarefree monomial ideal \( I \), determining its (symbolic) limit depth and the index of (symbolic) depth stability is a difficult problem. When restricted to the class of edge ideals and cover ideals of graphs, the problem is better understood. We now recall the definitions of these two ideals associated with a finite simple graph. Let \( G \) be a graph with vertex set \( V(G) = \{1, \ldots, n\} \) and edge set \( E(G) \). The \emph{edge ideal} and the \emph{cover ideal} of \( G \), denoted by \( I(G) \) and \( J(G) \), respectively, are defined by
\[
I(G) = (x_i x_j \mid \{i,j\} \in E(G)), 
\qquad
J(G) = \bigcap_{\{i,j\} \in E(G)} (x_i, x_j).
\]

Trung~\cite{Tr} showed that the limit depth \( \depth(S/I(G)^t) \) is equal to the number of bipartite connected components of \( G \). Recently, Lam, Trung, and Trung~\cite{LTT} described the index of depth stability of \( I(G) \). Both the symbolic limit depth and the index of symbolic depth stability of edge ideals remain unknown; see~\cite{MTV2} for more information. On the other hand, for cover ideals, the symbolic limit depth is known by results of Hoa, Kimura, Terai, and Trung~\cite{HKTT}, and Binh, Hang, Hien, and Trung~\cite{BHHT} provided a sharp bound for the index of symbolic depth stability of \( J(G) \).

While the limiting behavior of the depth of powers of edge ideals and the depth of symbolic powers of cover ideals of graphs is fairly well understood, the intermediate values are known only for a very limited class of graphs~\cite{BC, MTV1, HHV}. For symbolic powers of cover ideals of graphs, the first nontrivial results were recently obtained by Dung, Hang, Nam, and Tam \cite{DHNT} for paths. We note that when \( G \) is a bipartite graph, it is known that \( I(G)^t = I(G)^{(t)} \) and \( J(G)^t = J(G)^{(t)} \)~\cite{SVV, HHT}. In this work, we compute the depth of symbolic powers of cover ideals of cycles.

\begin{thm}\label{main}
Let \( C_n \) be a cycle on \( n \) vertices. Then, for all \( t \ge 2 \), we have
\[
\depth\big(S / J(C_n)^{(t)}\big) = n - 1 - \left\lfloor \frac{tn}{2t+1} \right\rfloor.
\]
\end{thm}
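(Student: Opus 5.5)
We will use Takayama's formula for the local cohomology of monomial ideals, specialized to symbolic powers of cover ideals. Write $J = J(C_n)$. For $\ab \in \mathbb{N}^n$, the degree complex of $J^{(t)}$ at $\ab$ is a simplicial complex $\Delta_{\ab}(J^{(t)})$ on $[n]$. Its facets are the sets $F$ such that $V \setminus F$ is a vertex cover of $C_n$ and
\[
\sum_{i \in e \setminus F} a_i \le t-1 \quad\text{for every edge } e \text{ with } e \cap F = \emptyset .
\]
Equivalently, $F$ is an independent set, and every edge of $C_n[V\setminus F]$ has $a_i+a_j\le t-1$. Then
\[
\depth S/J^{(t)} = \min\{\, |F|+i+1 \,\},
\]
taken over $\ab$, faces $F$, and $i$ with $\tilde H_{i}(\mathrm{lk}\,F) \ne 0$.

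The plan is to encode these data as the paper's ``$t$-admissible subgraphs'' $H \subseteq G$ with weights. This should yield a combinatorial formula of the form
\[
\depth S/J(G)^{(t)} = \min_{H\ t\text{-admissible}} \bigl( |V(G)| - |V(H)| + \text{(something depending on } H) \bigr).
\]
The formula then splits into a lower bound and an upper bound.

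For the upper bound we construct an explicit $\ab$. Partition the cycle into blocks of length $2t+1$. On a path $P_{2t+1}$, choose weights $a$ alternating so that each edge sum equals exactly $t$, for instance $a = (0,t,0,t,\dots)$ adjusted. With these weights the degree complex contains a disconnected or cycle-like piece, and each block forces $t$ vertices to lie outside the relevant faces. This gives nonvanishing homology in degree $n-2-\lfloor tn/(2t+1)\rfloor$, hence
\[
\depth \le n-1-\left\lfloor \frac{tn}{2t+1} \right\rfloor .
\]
The remainder $n \bmod (2t+1)$ has to be absorbed by a short leftover path with weights chosen so that it costs nothing extra. We will check the cases $n\le 2t$ separately, where the formula gives depth $n-1-\lfloor n/2\rfloor$ or $n-1-\lfloor (n-1)/2\rfloor$. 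This should match the known depth of $J(C_n)$-type behaviour, or the limit depth $\lceil n/2\rceil-1$-type values from \cite{HKTT}.

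For the lower bound we show that any $\ab$ and face $F$ with nonvanishing reduced homology satisfy
\[
|F|+i+1 \ge n-1-\left\lfloor \frac{tn}{2t+1}\right\rfloor .
\]
The approach has three steps.
\begin{enumerate}
\item Reduce to links in induced subgraphs. The link of $F$ in $\Delta_{\ab}$ is again a degree complex of a cover ideal of $C_n \setminus N[F]$, a union of paths, or the cycle itself if $F=\emptyset$, with modified weights.
\item Handle path components. For paths we use the results of \cite{DHNT} on depth of symbolic powers of $J(P_m)$. This gives a bound per path component, and the bounds are additive over components via Künneth and joins.
\item Handle $F=\emptyset$ on the whole cycle. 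Here we use a counting argument: the $n$ edge constraints $a_i+a_{i+1}\le t-1$ or $\ge t$ around the cycle, summed, bound the number of ``heavy'' edges. This gives an inequality like $(2t+1)k \le tn$ for the homological degree parameter $k$.
\end{enumerate}

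The main obstacle is this last step. We must compute the homology of the degree complex of $J(C_n)^{(t)}$ itself for arbitrary weights and show it vanishes below the claimed degree. We would first try an inductive argument. Pick a vertex $v$ with $a_v$ maximal and use the link/deletion long exact sequence, so that the deletion and link are degree complexes for paths. Then verify that the inequality $\lfloor t(n-1)/(2t+1)\rfloor$ versus $\lfloor tn/(2t+1)\rfloor$ propagates correctly. The hypothesis $t\ge2$ should enter precisely here, since for $t=1$ the ordinary cover ideal behaves differently.
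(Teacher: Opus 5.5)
Your outline does not yet contain the argument, and its starting point is miscomputed. For $\ab\in\mathbb N^n$, a set $F$ lies in $\Delta_{\ab}(J(C_n)^{(t)})$ iff $x^{\ab}\notin J(C_n)^{(t)}S_F$, i.e.\ iff some edge $e$ with $e\cap F=\emptyset$ has $\sum_{i\in e}a_i\le t-1$. So $\Delta_{\ab}$ is generated by the sets $[n]\setminus e$ over the ``light'' edges $e$. It is therefore the Stanley--Reisner complex of $J(H)$, where $H$ is the subgraph of light edges. This is Lemma~\ref{lem_assrad}, and with it Takayama's formula collapses to Hochster's: $\depth S/J(C_n)^{(t)}=\min_H \depth S/J(H)$ over $t$-admissible $H$.

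Your description (independent sets $F$ with every edge off $F$ light) is a different complex. For $\ab=0$ it gives the independence complex of $C_n$ rather than the complex of non-vertex-covers. Likewise, the link of $F$ corresponds to the light edges of $C_n\setminus F$, not $C_n\setminus N[F]$.

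More importantly, you never identify the ``something depending on $H$''. The key input is Terai's duality $\pd S/J(H)=\reg I(H)$, which gives $\depth S/J(C_n)^{(t)}=n-\max_H\reg I(H)$. Combine this with $\reg I(H)=1+\nu(H)$ for the forests $H\subsetneq C_n$. Without this, neither bound becomes combinatorial, and your upper bound cannot certify nonvanishing homology; ``a disconnected or cycle-like piece'' is not an argument.

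The concrete steps also fail or are missing.

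\emph{Upper bound.} With $\ab=(0,t,0,t,\dots)$ every edge sum is $t$, so every edge is heavy and the block contributes nothing. What is needed is the staircase $0,t-1,1,t-2,\dots,t-1,0$ on $2t$ consecutive vertices, followed by one vertex of weight $t$. Then $t$ alternate edges are light (sum $t-1$), while the edges between them and the two separating edges are heavy. Each block of $2t+1$ edges thus contributes $t$ edges to an induced matching of $H$.

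\emph{Lower bound.} The case $F=\emptyset$, which you call the main obstacle, is the whole content of the theorem. Neither a global sum ``around the cycle'' nor an unspecified induction on a maximal-weight vertex supplies it. The missing idea runs as follows. After shrinking blocks (Lemmas~\ref{lem_red_1} and \ref{lem_red_2}), one may assume $H$ is a matching. Its edges split into chains $e_j,e_{j+2},\dots$, consecutive chains being separated by at least two edges. Along a chain, the constraints $u_i+v_i\le t-1$ and $v_i+u_{i+1}\ge t$ force $u_{i+1}>u_i$ with $0\le u_i\le t-1$. (Equivalently, summing over a single chain of $k$ edges gives $(k-1)t\le k(t-1)$; your summing idea works only chain by chain.) Hence every chain has at most $t$ edges. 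Counting $2\ell_i-1$ edges per chain plus two separating edges then gives $n\ge 2r+q\ge 2r+r/t$.

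\emph{Role of $t\ge 2$.} It enters not through an induction but in the case where all edges are light ($\ab=0$, $H=C_n$). There one needs $\reg I(C_n)=1+\lfloor (n+1)/3\rfloor\le 1+\lfloor tn/(2t+1)\rfloor$, which fails for $t=1$ and $n\equiv 2\pmod 3$.
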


To achieve this, we use Hochster's formula for depth \cite{H} to reduce the problem to computing the regularity of admissible subgraphs; see Section~\ref{sec_2} for more details. For cycles, we explicitly describe the \( t \)-admissible subgraphs in order to derive the main result.

\section{Admissible subgraphs}\label{sec_2}
In this section, we recall basic notation and results used throughout the paper. We then introduce the notion of \( t \)-admissible subgraphs and explain how to use them to compute the depth of symbolic powers of edge ideals of graphs. Throughout the paper, let \( K \) be a field, \( S = K[x_1, \ldots, x_n] \) a polynomial ring, and \( \mathfrak{m} = (x_1, \ldots, x_n) \) the \emph{maximal homogeneous ideal} of \( S \).

\subsection{Depth and Castelnuovo--Mumford regularity}

Let \( L \) be a nonzero finitely generated graded \( S \)-module. Let \( H_{\mathfrak{m}}^i(L) \) denote the \( i \)-th local cohomology module of \( L \) with support in \( \mathfrak{m} \). Then the depth and the regularity of \( L \) are defined by
\[
\depth(L) = \min \{ i \mid H_{\mathfrak{m}}^i(L) \neq 0 \},
\]
and
\[
\reg(L) = \max \{ j + i \mid H_{\mathfrak{m}}^i(L)_j \neq 0,\ \text{for } i = 0, \ldots, \dim(L),\ j \in \mathbb{Z} \}.
\]

Let \( I \) be a monomial ideal in \( S \). From~\cite{H}, we recall the concept of an \emph{associated radical ideal} of \( I \), defined as follows.

\begin{defn}
Let \( I \) be a monomial ideal in \( S \), and let \( u \) be a monomial not contained in \( I \). The radical ideal \( Q := \sqrt{I : u} \) is called an \emph{associated radical ideal} of \( I \). We denote the set of all associated radical ideals of \( I \) by \( \mathrm{assrad}(I) \).
\end{defn}

\begin{rem}\label{associatedradical}
Let \( I \) be a monomial ideal in \( S \). For each associated prime \( P \) of \( I \), by~\cite{HH}, there exists a monomial \( f \) such that \( P = I : f \). Hence, \( P \) is an associated radical of \( I \). Furthermore, \( \sqrt{I} = \sqrt{I : 1} \) is also an associated radical of \( I \).
\end{rem}

Hochster showed that the depth of monomial ideals can be computed via the depth of their associated radicals.

\begin{thm}[Hochster]\label{Hochsterformula}
Let \( I \) be a monomial ideal of \( S \). Then
\[
\depth(S/I) = \min \{ \depth(S/Q) \mid Q \text{ is an associated radical of } I \}.
\]
\end{thm}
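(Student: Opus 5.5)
The plan is to compare local cohomology on both sides degree by degree, using Takayama's formula for $\mathbb{Z}^n$-graded local cohomology of $S/I$ together with Hochster's formula for squarefree monomial ideals. I will prove two inequalities. First, every nonzero graded piece $H^i_{\mathfrak m}(S/I)_{\ab}$ is matched by a nonzero piece of $H^i_{\mathfrak m}(S/Q)$ for some associated radical $Q$. Second, every nonzero $H^i_{\mathfrak m}(S/Q)$ with $Q=\sqrt{I:u}$ forces $H^i_{\mathfrak m}(S/I)_{\ab}\neq 0$ for a suitable $\ab$.

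\textbf{Setup.} For $\ab\in\mathbb Z^n$ write $G_{\ab}=\{j : a_j<0\}$ and $\ab^+$ for the positive part. Let $\rho_j$ be the largest exponent of $x_j$ among the minimal generators of $I$. Takayama's theorem gives
\[
\dim_K H^i_{\mathfrak m}(S/I)_{\ab}=\dim_K \widetilde H_{i-|G_{\ab}|-1}(\Delta_{\ab}(I);K).
\]
This piece vanishes unless $a_j<\rho_j$ for all $j$. Here $\Delta_{\ab}(I)$ is the degree complex, consisting of the sets $F\subseteq[n]\setminus G_{\ab}$ with $G_{\ab}\cup F$ a face of the Stanley--Reisner complex of $\sqrt{I:x^{\ab^+}}$; equivalently, $x^{\ab^+}\notin IS_{x_{F\cup G_{\ab}}}$.

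\textbf{Key identification.} The main step is to show that
\[
\Delta_{\ab}(I)=\mathrm{lk}_{\Delta}(G_{\ab}),
\]
where $\Delta$ is the Stanley--Reisner complex of $Q=\sqrt{I:x^{\ab^+}}$. This is the only place with real content, and the main point to check is that localization commutes correctly with the colon and the radical. Concretely, $x^{\ab^+}\in IS_{x_H}$ if and only if some power of $x_H$ lies in $I:x^{\ab^+}$, which holds if and only if $x_H\in Q$. Note also that $x^{\ab^+}\notin I$ whenever $\Delta_{\ab}(I)\neq\emptyset$, so $Q$ is a genuine associated radical. The remaining degenerate cases, where the complex is empty or equals $\{\emptyset\}$, must be handled separately.

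\textbf{The inequality $\depth(S/I)\ge\min_Q\depth(S/Q)$.} Take $i=\depth(S/I)$ and choose $\ab$ with $H^i_{\mathfrak m}(S/I)_{\ab}\neq0$. By Hochster's formula for the squarefree ideal $Q$,
\[
H^i_{\mathfrak m}(S/Q)_{\mathbf b}\cong \widetilde H_{i-|G|-1}(\mathrm{lk}_\Delta G;K)
\]
for $\mathbf b\le 0$ with negative support $G$. Taking $G=G_{\ab}$, the key identification shows $H^i_{\mathfrak m}(S/Q)\neq0$. Hence $\depth(S/Q)\le i$.

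\textbf{The inequality $\depth(S/I)\le\min_Q\depth(S/Q)$.} Let $Q=\sqrt{I:u}$ with $u=x^{\mathbf c}\notin I$. First I will cap the exponents: the chain $I:x_j^k$ stabilizes once $k\ge\rho_j$, so we may replace $c_j$ by $\min(c_j,\rho_j-1)$ without changing $I:u$. Any $c_j$ that could not be lowered in this way would force $u\in I$, which is excluded. Next, suppose $H^i_{\mathfrak m}(S/Q)\neq 0$, attained in degree $\mathbf b$ with negative support $G$. Then $\mathrm{lk}_\Delta G$ has nonzero $\widetilde H_{i-|G|-1}$, so in particular $G\in\Delta$.

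The conflict is that $\ab$ must have $\ab^+=\mathbf c$ and negative support $G$, yet $\mathbf c$ may be positive on $G$. I will resolve this by showing that, for $j\in G$, lowering $c_j$ to $0$ does not change the link. Because $x_G\notin Q$, the computation $\sqrt{I:u}$ localized at $x_G$ is insensitive to the exponents of the variables in $G$. With that in hand, set $a_j=-1$ for $j\in G$ and $a_j=c_j$ otherwise. Then the key identification applies and gives $H^i_{\mathfrak m}(S/I)_{\ab}\neq0$, so $\depth(S/I)\le i$.

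I expect the main obstacle to be this last adjustment: reconciling a positive exponent of $u$ with a negative coordinate of $\ab$, and keeping the degenerate cases honest.
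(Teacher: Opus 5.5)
Your argument is correct in outline, and it is the standard Takayama-based proof; one auxiliary step is false as written, though the proof does not need it. The paper gives no proof of this statement and simply quotes it from the literature. Your ingredients are the right ones:
\begin{itemize}
\item the localization form of Takayama's degree complex;
\item the identity $\Delta_{\mathbf a}(I)=\mathrm{lk}_{\Delta(Q)}G_{\mathbf a}$ for $Q=\sqrt{I:x^{\mathbf a^+}}$;
\item Hochster's formula for $S/Q$.
\end{itemize}

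The adjustment you worried about is immediate. For $F\cap G=\emptyset$, the monomials $u$ and $u/x_G^{\mathbf c_G}$ differ by a unit in $S_{x_{F\cup G}}$, so they give the same link at $G$. This holds because $x_G$ is inverted in every localization defining the link, not because $x_G\notin Q$. The degenerate cases also need no separate treatment. Takayama's and Hochster's formulas use the same conventions: the void complex has no reduced homology, and $\{\emptyset\}$ has $\widetilde H_{-1}=K$.

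The false step is the exponent capping. Stabilization of $I:x_j^k$ for $k\ge\rho_j$ lets you replace $c_j$ by $\min(c_j,\rho_j)$, not by $\min(c_j,\rho_j-1)$. Also, $u\notin I$ does not prevent $c_j\ge\rho_j$. For example, take $I=(x^2y)\subset K[x,y]$ and $u=x^5\notin I$. Then $I:x^5=(y)$, but $I:x=(xy)$, and the radicals differ. So ``without changing $I:u$'' fails, and so does ``could not be lowered would force $u\in I$''.

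You can drop the capping entirely. With the localization description of $\Delta_{\mathbf a}(I)$, Takayama's formula holds for every $\mathbf a\in\mathbb Z^n$. The cases excluded in Takayama's original statement give either a void complex or a cone, so both sides vanish. If you prefer to verify $a_j<\rho_j$ directly, argue as follows. Suppose $j\notin G$ and $c_j\ge\rho_j$. Let $F$ be a face of $\mathrm{lk}_{\Delta(Q)}G$. Any minimal generator $x^{\mathbf b}$ of $I$ dividing $u\,x_{F\cup G}^N x_j^N$ already divides $u\,x_{F\cup G}^N$, since $b_j\le\rho_j\le c_j$. Hence $F\cup\{j\}$ is also a face, so the link is a cone with apex $j$ and is acyclic. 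This contradicts $\widetilde H_{i-|G|-1}(\mathrm{lk}_{\Delta(Q)}G)\neq0$. Therefore $c_j<\rho_j$ automatically for $j\notin G$, and your vector $\mathbf a$ (with $a_j=-1$ on $G$ and $a_j=c_j$ elsewhere) works with no capping.
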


\subsection{Graphs and their edge ideal and cover ideals }
We recall some basic notions from graph theory; for further details, see~\cite{BM}.
\begin{defn}
Let \( G \) be a simple graph with vertex set \( V(G) = \{1, \ldots, n\} \) and edge set \( E(G) \).

\begin{enumerate}
    \item A simple graph \( H \) is a subgraph of \( G \) if \( V(H) \subseteq V(G) \) and \( E(H) \subseteq E(G) \). It is an \emph{induced subgraph} of \( G \) if \( E(H) = E(G) \cap \big( V(H) \times V(H) \big) \).
    
    \item For a subset \( U \subseteq V(G) \), we denote by \( G[U] \) and \( G \setminus U \) the induced subgraphs of \( G \) on \( U \) and on \( V(G) \setminus U \), respectively.
    
    \item A path \( P_n \) on \( n \) vertices is the graph with vertex set \( V(P_n) = \{1, \ldots, n\} \) and edge set
    \[
    E(P_n) = \{ \{1,2\}, \ldots, \{n-1,n\} \}.
    \]
    
    \item A cycle \( C_n \) on \( n \) vertices is the graph with vertex set \( V(C_n) = \{1, \ldots, n\} \) and edge set
    \[
    E(C_n) = E(P_n) \cup \{ \{1,n\} \}.
    \]
    
    \item A \emph{forest} is a graph with no cycles. A \emph{tree} is a connected forest.
    
    \item A subset \( M \subseteq E(G) \) is called a \emph{matching} of \( G \) if no two edges in \( M \) share a common vertex. It is an \emph{induced matching} if the subgraph induced by the vertices of \( M \) has edge set exactly \( M \). The \emph{induced matching number} of \( G \), denoted by \( \nu(G) \), is the maximum size of an induced matching in \( G \).
\end{enumerate}
\end{defn}

\begin{lem}\label{lem_ind_path}
Let \( P_n \) be a path on \( n \) vertices. Then
\[
\nu(P_n) = \left\lfloor \frac{n-1}{3} \right\rfloor.
\]
\end{lem}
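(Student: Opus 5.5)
We prove the two inequalities separately. Both rest on the same observation about consecutive vertices. Index the vertices of \(P_n\) as \(1,\ldots,n\) along the path, and write every edge as \(\{i,i+1\}\).

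\textbf{Lower bound.} For \(k=\lfloor (n-1)/3\rfloor\), take the edges \(e_i=\{3i-2,3i-1\}\) for \(i=1,\ldots,k\). The largest vertex used is \(3k-1\le n-2\), so all these edges lie in \(P_n\). Consecutive chosen edges are separated by the unused vertex \(3i\). Hence no edge of \(P_n\) joins vertices of two different \(e_i\), and the subgraph induced on \(\bigcup_i e_i\) has edge set exactly \(\{e_1,\ldots,e_k\}\). This shows \(\nu(P_n)\ge \lfloor (n-1)/3\rfloor\).

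\textbf{Upper bound.} Let \(M=\{\{a_1,a_1+1\},\ldots,\{a_k,a_k+1\}\}\) be an induced matching with \(a_1<\cdots<a_k\). The induced condition forbids the edge \(\{a_j+1,a_{j+1}\}\), so \(a_{j+1}\ge a_j+3\). By induction, \(a_k\ge a_1+3(k-1)\ge 3k-2\), and the last edge requires \(a_k+1\le n\). The counting I would then attempt is to attach to each edge the block of three consecutive vertices \(\{a_j,a_j+1,a_j+2\}\), including the one after the last edge. These blocks are pairwise disjoint and contained in \(\{1,\ldots,n\}\). That containment would give \(3k\le n-1\), that is, \(k\le\lfloor (n-1)/3\rfloor\).

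\textbf{Main obstacle.} I expect the hard step to be justifying this endpoint accounting, and I am not confident it can be done. Requiring the third vertex \(a_k+2\) to exist past the last edge does not follow from the induced-matching definition as written. The honest spacing argument above only gives \(3k-1\le n\), i.e.\ \(k\le\lfloor (n+1)/3\rfloor\). Small cases confirm the gap. For \(P_2\), the single edge \(\{1,2\}\) is an induced matching, so \(\nu(P_2)=1\ne 0\). For \(P_5\), \(\{\{1,2\},\{4,5\}\}\) is induced, so \(\nu(P_5)=2\ne 1\).

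So before writing the proof, I would check the paper's later uses of this lemma. If they need \(\lfloor (n+1)/3\rfloor\) (the standard value, equal to \(\reg(S/I(P_n))\)), the statement should be corrected to that. If they genuinely need \(\lfloor (n-1)/3\rfloor\), the missing endpoint condition should be written into the definition or the hypotheses; the lower bound and the block argument then go through as above.
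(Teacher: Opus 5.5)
Your diagnosis is right, and there is nothing in your argument to repair. As printed, the statement is false, and the paper gives no proof of it: it is stated as a known fact and never cited again.

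Your counterexamples are valid. $\{1,2\}$ is an induced matching of $P_2$ and of $P_3$, and $\{\{1,2\},\{4,5\}\}$ is one of $P_5$. In general, $\lfloor (n+1)/3\rfloor-\lfloor (n-1)/3\rfloor=1$ whenever $n\ge 2$ and $n\not\equiv 1\pmod 3$, so the printed formula is correct only for $n\equiv 1\pmod 3$.

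The true value is $\nu(P_n)=\lfloor (n+1)/3\rfloor$, and your own material proves it. The spacing estimate $a_1\ge 1$, $a_{j+1}\ge a_j+3$, $a_k+1\le n$ gives $3k\le n+1$. Your construction with the edges $\{3i-2,3i-1\}$ works verbatim for $k=\lfloor (n+1)/3\rfloor$, since then $3k-1\le n$.

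The rest of the paper confirms that this corrected value is what is intended. So the right fix is to change the formula, not to add an endpoint hypothesis.

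In the proof of Theorem~\ref{main}, the reduction to blocks of size one via Lemmas~\ref{lem_red_1} and~\ref{lem_red_2} tacitly requires that $\nu$ not drop under these replacements:
\begin{itemize}
\item a block of $b$ consecutive edges (a path on $b+1$ vertices) replaced by blocks of sizes $1$ and $b-3$;
\item such a block replaced by a single edge when $b\in\{2,3\}$.
\end{itemize}
This is the identity $\lfloor (b+2)/3\rfloor=1+\lfloor (b-1)/3\rfloor$ together with $\lfloor 4/3\rfloor=\lfloor 5/3\rfloor=1$, which is exactly the corrected formula. The printed formula would instead assign $\nu=0$ to a single edge ($P_2$). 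That is incompatible with the count $\nu(H)=r$ used at the end of that proof.

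One small slip in your fallback: $k$ disjoint triples inside $\{1,\ldots,n\}$ give $3k\le n$, not $3k\le n-1$. So the one-sided endpoint condition you describe would only yield $\lfloor n/3\rfloor$. Getting $\lfloor (n-1)/3\rfloor$ would require forbidding both end vertices, which amounts to computing $\nu(P_{n-2})$.
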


\begin{defn}
Let \( G \) be a simple graph with vertex set \( V(G) = \{1, \ldots, n\} \) and edge set \( E(G) \). The \emph{edge ideal} of \( G \) is defined by
\[
I(G) = (x_i x_j \mid \{i,j\} \in E(G)) \subseteq S.
\]

The \emph{cover ideal} of \( G \) is defined by
\[
J(G) = \bigcap_{\{i,j\} \in E(G)} (x_i, x_j).
\]
\end{defn}

By a result of Jacques \cite{J}, we have
\begin{lem}\label{lem_reg_cycle}
Let \( C_n \) be a cycle on \( n \) vertices. Then
\[
\reg I(C_n) = 1 + \left\lfloor \frac{n+1}{3} \right\rfloor.
\]
\end{lem}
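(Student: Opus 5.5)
We prove $\reg I(C_n) = 1 + \lfloor (n+1)/3 \rfloor$ by induction on $n$. The lower bound comes from an induced matching argument. The upper bound comes from a vertex-splitting exact sequence that reduces the cycle to paths. Throughout we use the standard facts for a graph $G$: $\reg I(G) \ge \nu(G)+1$ (Katzman), $\reg I(G) = \nu(G)+1$ when $G$ is a forest (Zheng), and regularity is additive over disjoint unions, $\reg S/I(G_1\sqcup G_2)=\reg S/I(G_1)+\reg S/I(G_2)$.

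\emph{Step 1: the induced matching number of the cycle.} We check that $\nu(C_n)=\lfloor n/3\rfloor$. Each edge of an induced matching, together with the vertex following it around the cycle, uses three vertices, and these triples are disjoint. So $\nu(C_n)\le \lfloor n/3\rfloor$. Conversely, the edges $\{3i+1,3i+2\}$ for $0\le i<\lfloor n/3\rfloor$ form an induced matching.

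\emph{Step 2: the case $n\equiv 0,1 \pmod 3$.} Here $\lfloor (n+1)/3\rfloor=\lfloor n/3\rfloor$, so the lower bound $\reg I(C_n)\ge \nu(C_n)+1$ already matches the claimed value. For the upper bound, take the short exact sequence for $x=x_n$:
\[
0\to S/(I:x_n)(-1)\to S/I\to S/(I,x_n)\to 0,
\]
with $I=I(C_n)$. This gives
\[
\reg S/I\le\max\{\reg S/(I:x_n)+1,\ \reg S/(I,x_n)\}.
\]
We have $(I,x_n)=(I(P_{n-1}),x_n)$. Also $I:x_n=(x_1,x_{n-1})+I(P_{n-3})$, where $P_{n-3}$ is the path on $2,\dots,n-2$. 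Applying Lemma~\ref{lem_ind_path} together with Zheng's formula gives
\[
\reg S/(I,x_n)=\lfloor (n-2)/3\rfloor, \qquad \reg S/(I:x_n)+1=\lfloor (n-4)/3\rfloor+1.
\]
Both are at most $\lfloor (n+1)/3\rfloor$, and this gives the upper bound.

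\emph{Step 3: the case $n\equiv 2\pmod 3$, the main obstacle.} Here the claimed value exceeds $\nu+1$ by one, so the lower bound needs a genuine argument rather than the matching bound. The upper bound still follows as in Step 2: the term $\lfloor (n-4)/3\rfloor+1=(n-2)/3+1=(n+1)/3$ is the relevant one. For the lower bound, I would show that $\reg S/I\ge \reg S/(I:x_n)+1$. This holds because $\reg S/(I,x_n)=(n-2)/3$ is strictly smaller than $(n+1)/3$, so the connecting map in the long exact sequence of Tor (or of local cohomology) cannot cancel the top nonzero piece of $S/(I:x_n)(-1)$. Concretely, from
\[
\reg A\le\max\{\reg B,\ \reg C+1\}
\]
applied as $\reg S/(I:x_n)(-1)\le \max\{\reg S/I,\ \reg S/(I,x_n)+1\}$, we have $\reg S/(I:x_n)(-1)=(n+1)/3$ and $\reg S/(I,x_n)+1=(n+1)/3$. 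Because these coincide, this inequality alone is inconclusive.

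I would therefore treat the lower bound here as the delicate point and instead compute a nonvanishing Betti number directly via Hochster's formula. The goal is to exhibit
\[
\tilde H_{k-1}(\Delta(C_n)^\vee)\neq 0 \quad\text{or}\quad \tilde H(\mathrm{Ind}(C_n))\neq 0
\]
in the right degree. Taking $W=V(C_n)$, the independence complex of $C_{3k+2}$ is homotopy equivalent to $S^{k}$ (Kozlov). This gives $\beta_{i,n}(S/I)\ne 0$ with $n-i=2k+1$... wait, this yields $n-i-1=k+1-1$, hence
\[
\reg S/I\ge n-i=k+1=(n+1)/3,
\]
which is the required bound.
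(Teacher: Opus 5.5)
Your overall strategy works, and it differs from the paper, which gives no argument and simply cites Jacques. Jacques obtains the Betti numbers of $S/I(C_n)$ by applying Hochster's formula to the independence complexes of all induced subgraphs of $C_n$. You need much less:
\begin{itemize}
\item the colon exact sequence plus Zheng's formula for forests gives the upper bound for every $n$;
\item Katzman's bound gives the lower bound when $n\not\equiv 2 \pmod 3$;
\item only the full cycle's independence complex is needed when $n\equiv 2\pmod 3$, via Kozlov's $\mathrm{Ind}(C_{3k+2})\simeq S^k$.
\end{itemize}
Your Hochster computation, $\beta_{2k+1,n}(S/I)\neq 0$ and hence $\reg S/I\ge k+1$, is correct. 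Kozlov's homotopy types for $n=3k,3k+1$ give the matching lower bound $k$ the same way, so Katzman is not even needed. Your route costs a few black-box theorems but is short. Jacques's route yields the full Betti table.

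However, the numbers in Steps 2 and 3 are wrong, so fix them before the argument can stand. You used Lemma~\ref{lem_ind_path} as printed, but that formula is incorrect: $\nu(P_m)=\lfloor (m+1)/3\rfloor$ (e.g.\ $\nu(P_2)=\nu(P_3)=1$). The correct values are
\[
\reg S/(I,x_n)=\nu(P_{n-1})=\lfloor n/3\rfloor, \qquad \reg S/(I:x_n)+1=\lfloor (n-2)/3\rfloor+1=\lfloor (n+1)/3\rfloor .
\]
With your values, the exact sequence would give $\reg S/I(C_n)\le\lfloor (n-1)/3\rfloor$. For $n=3k$ this equals $k-1<\nu(C_{3k})$, which contradicts your own lower bound.

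In Step 3, the identity $\lfloor (n-4)/3\rfloor+1=(n-2)/3+1$ is false for $n\equiv 2\pmod 3$; the left side equals $(n-2)/3$. So two slips cancel there.

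Once the values are corrected, the upper bound $\max\{\lfloor (n+1)/3\rfloor,\lfloor n/3\rfloor\}=\lfloor (n+1)/3\rfloor$ holds for all $n$. Your observation that $\reg S/(I:x_n)(-1)=\reg S/(I,x_n)+1=(n+1)/3$, so the sequence is inconclusive for the lower bound, is then accurate.

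Also delete the retracted sentence about the connecting map. That criterion would need $\reg S/(I:x_n)(-1)>\reg S/(I,x_n)+1$, which fails here. Finally, drop the phrase ``by induction on $n$'', since no induction is used.
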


\subsection{Associated radicals of symbolic powers of cover ideals and admissible subgraphs} In this section, we assume that $G$ is a hypergraph with vertex set $V(G)$ and edge set $E(G)$. Each edge $e$ of $G$ is a subset of $V(G)$, and no edge is properly contained in another; that is, for any two distinct edges $e, f \in E(G)$, neither $e \subsetneq f$ nor $f \subsetneq e$ holds. A hypergraph $H$ is a \emph{subhypergraph} of $G$ if $V(H) \subseteq V(G)$ and $E(H) \subseteq E(G)$.

\begin{defn}
Let $G$ be a hypergraph with vertex set $V(G) = \{1, \ldots, n\}$ and edge set $E(G)$. The \emph{edge ideal} and \emph{cover ideal} of $G$, denoted by $I(G)$ and $J(G)$, are defined by
\[
I(G) = (x_e \mid e \in E(G))
\quad \text{and} \quad
J(G) = \bigcap_{e \in E(G)} (x_i \mid i \in e).
\]
The $t$-th symbolic power of the cover ideal of $G$ is defined by
\[
J(G)^{(t)}
=
\bigcap_{e \in E(G)} (x_i \mid i \in e)^t.
\]
\end{defn}

For an exponent \( \mathbf{a} = (a_1, \ldots, a_n) \in \mathbb{N}^n \), we denote \( x^{\mathbf{a}} = x_1^{a_1} \cdots x_n^{a_n} \). The support of \( \mathbf{a} \) (and of the monomial \( x^{\mathbf{a}} \)) is defined by
\[
\supp(\mathbf{a}) = \{ i \in \{1, \ldots, n\} \mid a_i \neq 0 \}.
\]

\begin{lem}\label{lem_assrad}
Let $G$ be a hypergraph, and let $x^\ab$ be a monomial in $S$. Then
\[
\sqrt{J(G)^{(t)} : x^\ab} = J(H),
\]
where $H$ is the subhypergraph of $G$ with edge set
\[
E(H)
=
\bigl\{
\{i_1,\ldots,i_s\} \in E(G)
\;\big|\;
a_{i_1} + \cdots + a_{i_s} < t
\bigr\}.
\]
\end{lem}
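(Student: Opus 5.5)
We compute the colon ideal prime by prime and then take radicals. Write \(P_e = (x_i \mid i \in e)\) for each edge \(e \in E(G)\). Since colon commutes with finite intersections,
\[
J(G)^{(t)} : x^{\ab} = \bigcap_{e \in E(G)} \bigl(P_e^t : x^{\ab}\bigr).
\]
The radical of a finite intersection is the intersection of the radicals, so it suffices to show, for each single edge \(e = \{i_1,\ldots,i_s\}\), that
\[
\sqrt{P_e^t : x^{\ab}} =
\begin{cases}
S, & a_{i_1}+\cdots+a_{i_s} \ge t,\\
P_e, & a_{i_1}+\cdots+a_{i_s} < t.
\end{cases}
\]
Intersecting over all edges then leaves exactly \(\bigcap_{e\in E(H)} P_e = J(H)\), with the convention that the empty intersection is \(S\), i.e.\ \(J(H) = S\) when \(H\) has no edges.

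For the single-edge computation, first note that \(P_e^t\) is generated by the monomials of degree \(t\) in the variables \(x_i\), \(i \in e\). Hence a monomial \(x^{\mathbf b}\) lies in \(P_e^t\) if and only if \(\sum_{i\in e} b_i \ge t\). Consequently \(x^{\mathbf c} \in P_e^t : x^{\ab}\) if and only if \(\sum_{i\in e}(c_i+a_i) \ge t\). Since \(P_e^t : x^{\ab}\) is a monomial ideal, this describes it completely.

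There are two cases.

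\begin{enumerate}
\item If \(d := \sum_{i\in e} a_i \ge t\), then \(1\) lies in the colon ideal, so the colon is \(S\).
\item If \(d < t\), then the colon is \(P_e^{t-d}\). Indeed, the condition above reads \(\sum_{i\in e} c_i \ge t-d\), and \(t - d \ge 1\). Its radical is therefore \(P_e\).
\end{enumerate}

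This gives the displayed dichotomy. The one point needing care is the membership criterion \(x^{\mathbf b}\in P_e^t \iff \sum_{i\in e} b_i\ge t\), and it is immediate from the description of the generators of \(P_e^t\). The final identification with \(J(H)\) is just the definition of the cover ideal of the subhypergraph \(H\), so no step is a genuine obstacle.
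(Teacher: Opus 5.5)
Your proof is correct and follows the same route as the paper. Both distribute the colon and the radical over the finite intersection $\bigcap_{e} P_e^t$, and both observe that each factor contributes $S$ or $P_e$ according to whether $\sum_{i\in e} a_i \ge t$. You simply make explicit the single-edge computation $P_e^t : x^{\ab} = P_e^{t-d}$ and the empty-intersection convention, which the paper's one-line display leaves implicit.
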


\begin{proof}
We have
\begin{align*}
\sqrt{J(G)^{(t)} : x^\ab}
&=
\sqrt{
\left(
\bigcap_{\{i_1,\ldots,i_s\} \in E(G)}
(x_{i_1},\ldots,x_{i_s})^t
\right)
: x^\ab
} \\
&=
\bigcap
\Bigl\{
(x_{i_1},\ldots,x_{i_s})
\;\Big|\;
\{i_1,\ldots,i_s\} \in E(G)
\text{ and }
a_{i_1} + \cdots + a_{i_s} < t
\Bigr\} \\
&=
J(H).
\end{align*}
The conclusion follows.
\end{proof}

\begin{defn}
A nonempty subhypergraph \( H \) of \( G \) is said to be \( t \)-\emph{admissible} if there exists \( \mathbf{a} \in \mathbb{N}^n \) such that
\begin{enumerate}
    \item \( a_{i_1} + \cdots + a_{i_s} \ge t \) for all \( \{i_1,\ldots,i_s\} \in E(G) \setminus E(H) \),
    \item \( a_{i_1} + \cdots + a_{i_s} < t \) for all \( \{i_1,\ldots,i_s\} \in E(H) \).
\end{enumerate}
We denote by \( \Adm_t(G) \) the set of all \( t \)-admissible subhypergraphs of \( G \), and by \( \Adm_t^*(G) \) the set of all \( t \)-admissible subhypergraphs of \( G \) excluding \( G \) itself.
\end{defn}

\begin{lem}\label{lem_depth_admissible}
Let \( G \) be a simple hypergraph. Then
\[
\depth(S/J(G)^{(t)}) = n - \max \{ \reg(I(H)) \mid H \text{ is a } t\text{-admissible subhypergraph of } G \}.
\]
\end{lem}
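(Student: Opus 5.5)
Combining Hochster's formula (Theorem~\ref{Hochsterformula}) with Lemma~\ref{lem_assrad} turns the depth of $S/J(G)^{(t)}$ into a minimum over cover ideals of $t$-admissible subhypergraphs. Alexander duality then converts the depth of each $S/J(H)$ into the regularity of $I(H)$.

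\emph{Reduction to admissible subhypergraphs.} By Theorem~\ref{Hochsterformula},
\[
\depth(S/J(G)^{(t)}) = \min\{\depth(S/Q) \mid Q \in \mathrm{assrad}(J(G)^{(t)})\}.
\]
Each associated radical has the form $\sqrt{J(G)^{(t)}:x^\ab}$ with $x^\ab \notin J(G)^{(t)}$. By Lemma~\ref{lem_assrad} this equals $J(H)$, where $E(H)$ consists of the edges $e$ with $\sum_{i\in e} a_i<t$. Such an $H$ is $t$-admissible by construction, witnessed by $\ab$. It is nonempty, because $x^\ab\notin J(G)^{(t)}$ means some edge $e$ has $x^\ab\notin (x_i\mid i\in e)^t$, i.e. $\sum_{i\in e}a_i<t$.

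Conversely, let $H$ be $t$-admissible, witnessed by $\ab$. Since $E(H)\neq\emptyset$, the same computation shows $x^\ab\notin J(G)^{(t)}$, and Lemma~\ref{lem_assrad} gives $\sqrt{J(G)^{(t)}:x^\ab}=J(H)$. Hence
\[
\mathrm{assrad}(J(G)^{(t)})=\{J(H)\mid H\in \Adm_t(G)\}.
\]
I will regard $J(H)$ as an ideal of $S$, with vertices of $G$ not in $H$ treated as free variables. This matters only through the convention that $n$ denotes the number of variables of $S$.

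\emph{Translating depth into regularity.} For a hypergraph $H$, the cover ideal $J(H)=\bigcap_{e}(x_i\mid i\in e)$ is the Alexander dual of the squarefree ideal $I(H)$, taken inside $S$. Terai's theorem states that for a squarefree monomial ideal $I\subseteq S$ with Alexander dual $I^\vee$,
\[
\pd(S/I^\vee)=\reg(I).
\]
Combined with Auslander--Buchsbaum this gives
\[
\depth(S/J(H)) = n-\pd(S/J(H)) = n-\reg(I(H)).
\]
Substituting into the minimum yields
\[
\depth(S/J(G)^{(t)})=n-\max\{\reg I(H)\mid H\in\Adm_t(G)\}.
\]

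\emph{Main obstacle.} The only delicate point is the bookkeeping around the ambient ring. Terai's formula must be applied in $S$ itself, not in the polynomial ring on $V(H)$. Regularity of $I(H)$ is unchanged by adjoining variables, while projective dimension is unchanged under flat extension, so both sides of the Terai equality are insensitive to the isolated vertices. With that in place, the equivalence between associated radicals and admissible subhypergraphs, including the nonemptiness condition matching $x^\ab\notin J(G)^{(t)}$, completes the proof.
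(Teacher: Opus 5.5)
Your proof is correct and follows the paper's argument: Hochster's formula with Lemma~\ref{lem_assrad} reduces the depth to a minimum of $\depth(S/J(H))$ over $H\in\Adm_t(G)$, and then Terai's theorem $\pd(S/J(H))=\reg(I(H))$ together with Auslander--Buchsbaum finishes. You also check both inclusions identifying $\mathrm{assrad}(J(G)^{(t)})$ with $\{J(H)\mid H\in\Adm_t(G)\}$, including that $E(H)\neq\emptyset$ corresponds exactly to $x^{\ab}\notin J(G)^{(t)}$, which the paper leaves implicit.
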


\begin{proof}
By Theorem~\ref{Hochsterformula} and Lemma~\ref{lem_assrad}, we have
\[
\depth(S/J(G)^{(t)}) = \min \{ \depth(S/J(H)) \mid H \in \Adm_t(G) \}.
\]
Since \( J(G) \) is the Alexander dual of the edge ideal \( I(G) \), by a result of Terai~\cite{Te}, we have \( \pd(S/J(H)) = \reg(I(H)) \). The conclusion follows from the Auslander--Buchsbaum formula.
\end{proof}

\begin{cor}\label{cor_cycle_depth}
Assume that \( G \) is a cycle or a forest. Then
\[
\depth(S/J(G)^{(t)}) = n - \max \{ \reg (I(G)), 1 + \nu(H) \mid H \in \Adm_t^*(G) \}.
\]
\end{cor}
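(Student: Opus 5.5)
Starting from Lemma~\ref{lem_depth_admissible}, the task is to evaluate $\max\{\reg(I(H)) \mid H \in \Adm_t(G)\}$ when $G$ is a cycle or a forest. We split this set into the graph $G$ itself, which is always $t$-admissible (take $\mathbf a = 0$), and the proper admissible subgraphs $H \in \Adm_t^*(G)$. The first part contributes $\reg(I(G))$. It therefore suffices to prove that $\reg(I(H)) = 1 + \nu(H)$ for every $H \in \Adm_t^*(G)$.

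Let $H$ be a proper subgraph of $G$, viewed as a graph on the vertex set $V(G)$, with possibly isolated vertices and only the edges of $E(H)$. If $G$ is a forest, then $H$ is a forest as well. If $G$ is a cycle, then removing at least one edge, which happens because $H \neq G$, leaves a disjoint union of paths together with isolated vertices, so $H$ is again a forest. In both cases $H$ is a forest.

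For forests we use the classical result of Zheng, which is also a special case of the results for chordal graphs and weakly chordal graphs: if $H$ is a forest, then $\reg(I(H)) = \nu(H) + 1$. Isolated vertices play no role, since $I(H)$ depends only on the edges. One degenerate case needs attention. If $E(H) = \emptyset$, then $I(H) = 0$ and $J(H) = S$, so $H$ is not a legitimate admissible subgraph. This is because $\sqrt{J(G)^{(t)} : x^{\mathbf a}}$ is taken with $x^{\mathbf a} \notin J(G)^{(t)}$, which forces at least one edge sum to be $< t$. We therefore adopt the convention that admissible subgraphs have at least one edge, which is how "nonempty" is read in the definition. For such $H$, the value $1 + \nu(H)$ is meaningful, and Zheng's formula applies.

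Putting these together, the maximum in Lemma~\ref{lem_depth_admissible} equals $\max\{\reg(I(G)),\, 1 + \nu(H) \mid H \in \Adm_t^*(G)\}$, which is the claim. There is no real obstacle. The only delicate point is the bookkeeping just described: that "nonempty" means $E(H) \neq \emptyset$, and that a proper subgraph of a cycle is a forest. The substantive input is the citation of the forest regularity formula.
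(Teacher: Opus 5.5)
Your proposal is correct and follows the paper's own proof. Both combine Lemma~\ref{lem_depth_admissible}, the fact that every proper subgraph of a cycle is a forest, and the forest formula $\reg(I(H)) = \nu(H)+1$; the paper cites this formula as \cite[Theorem 4.7]{BHT}, which at the first power is Zheng's result. Your additional bookkeeping is accurate and makes explicit what the paper leaves implicit: $G$ is itself $t$-admissible via $\mathbf{a}=0$, and ``nonempty'' must mean $E(H)\neq\emptyset$ because $x^{\mathbf{a}}\notin J(G)^{(t)}$.
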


\begin{proof}
Since any proper subgraph of a cycle is a forest, the conclusion follows from Lemma~\ref{lem_depth_admissible} and~\cite[Theorem 4.7]{BHT}.
\end{proof}

\section{Depth of symbolic powers of cover ideals of cycles}
In this section, we analyze the \( t \)-admissible subgraphs of cycles and prove the main theorem. We first fix some notation. Let \( H \) be a nonempty proper subgraph of a cycle \( C_n \). Then the edge set \( E(H) \) can be written uniquely as a disjoint union of maximal sets of consecutive edges. Suppose that
\[
E(H) = B_1 \cup \cdots \cup B_r,
\]
where each \( B_i \) consists of consecutive edges. For each \( i \), let \( C_i \) denote the set of edges between \( B_i \) and \( B_{i+1} \), where we identify \( B_{r+1} \) with \( B_1 \).

By relabeling if necessary, we may assume that \( 1 \) is the first vertex of an edge in \( B_1 \). For each \( i \), suppose that the vertices of the edges in \( B_i \) are \( (b_i, b_i+1, \ldots, c_i) \). Then the vertices of the edges in \( C_i \) are \( (c_i, c_i+1, \ldots, b_{i+1}) \). In particular, we have \( c_i < b_{i+1} \) for \( i = 1, \ldots, r-1 \).

\begin{lem}\label{lem_admissible_cycle}
Let \( C_n \) be a cycle on \( n \) vertices. Suppose that \( H \) is a subgraph of \( C_n \) with
\[
E(H) = B_1 \cup \cdots \cup B_r,
\]
where each \( B_i \) consists of consecutive edges supported on the interval \( [b_i, c_i] \) for \( i = 1, \ldots, r \). Then \( H \) is \( t \)-admissible if and only if there exist integers \( u_i, v_i \in \{0, \ldots, t-1\} \) for \( i = 1, \ldots, r \) such that:
\begin{enumerate}
    \item If \( |B_i| = 1 \), then \( u_i + v_i < t \);
    \item If \( |C_i| = 1 \), then \( v_i + u_{i+1} \ge t \), where \( u_{r+1} = u_1 \).
\end{enumerate}
\end{lem}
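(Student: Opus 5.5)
The plan is to prove both directions directly from the definition of $t$-admissibility, taking $u_i$ and $v_i$ to be the values of $\mathbf a$ at the two endpoints $b_i$ and $c_i$ of the block $B_i$. Only the edge set of $H$ enters the definition, so we work with $E(H)=B_1\cup\cdots\cup B_r$ and the gaps $C_1,\ldots,C_r$. Each gap consists of at least one edge of $C_n$ not in $H$, and $E(C_n)\setminus E(H)=C_1\cup\cdots\cup C_r$. For the cyclic indexing we have in mind the case that $H$ is a nonempty proper subgraph; the case $E(H)=E(C_n)$ has no gaps and is trivially admissible by taking $\mathbf a=0$. Note that the vertices $b_1,c_1,b_2,c_2,\ldots,b_r,c_r$ are pairwise distinct, because consecutive blocks are separated by at least one gap edge.

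\emph{Necessity.} Suppose $\mathbf a$ witnesses that $H$ is $t$-admissible, and set $u_i=a_{b_i}$ and $v_i=a_{c_i}$.
\begin{enumerate}
\item Since $b_i$ and $c_i$ lie on edges of $H$, and every edge of $H$ has $\mathbf a$-sum $<t$ with nonnegative entries, we get $u_i,v_i\le t-1$.
\item If $|B_i|=1$, then $B_i=\{\{b_i,c_i\}\}$, so $u_i+v_i<t$.
\item If $|C_i|=1$, then $\{c_i,b_{i+1}\}$ is an edge of $C_n$ not in $H$, so $v_i+u_{i+1}\ge t$.
\end{enumerate}

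\emph{Sufficiency.} Given $u_i,v_i\in\{0,\ldots,t-1\}$ satisfying (1) and (2), define $\mathbf a$ as follows:
\[
a_{b_i}=u_i,\qquad a_{c_i}=v_i,
\]
$a_j=0$ for every vertex $j$ interior to the path of a block $B_i$, and $a_j=t$ for every vertex $j$ interior to a gap $C_i$, which happens only when $|C_i|\ge 2$. This is well defined by the distinctness noted above.

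Next we check the edges of $H$, block by block.
\begin{itemize}
\item If $|B_i|=1$, its unique edge has sum $u_i+v_i<t$ by (1).
\item If $|B_i|\ge 2$, the edges of $B_i$ have sums $u_i+0$, $0+0$, or $0+v_i$, all at most $t-1$.
\end{itemize}

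Then we check the edges not in $H$, gap by gap.
\begin{itemize}
\item If $|C_i|=1$, its unique edge has sum $v_i+u_{i+1}\ge t$ by (2).
\item If $|C_i|\ge 2$, every edge of $C_i$ has at least one endpoint interior to the gap, so its sum is $v_i+t$, $t+t$, or $t+u_{i+1}$, each at least $t$.
\end{itemize}
Hence $\mathbf a$ satisfies both conditions in the definition of $t$-admissibility.

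The only real care point is the bookkeeping: confirming that the vertices $b_i,c_i$ are pairwise distinct, so that the assignment of $\mathbf a$ is consistent, and treating the wrap-around index $u_{r+1}=u_1$, including the case $r=1$ where the single gap $C_1$ runs from $c_1$ back around to $b_1$. Beyond that, the argument is a direct verification.
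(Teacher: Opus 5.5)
Your proposal is correct and follows the paper's proof: for necessity you take $u_i, v_i$ to be the values of $\mathbf a$ at $b_i$ and $c_i$, and for sufficiency you assign $0$ to vertices interior to blocks and $t$ to vertices interior to gaps. Your edge-by-edge verification, together with the check that $b_1,c_1,\ldots,b_r,c_r$ are pairwise distinct, fills in what the paper leaves as ``one checks directly.''
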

\begin{proof}
First, assume that \( H \) is \( t \)-admissible. Let \( \mathbf{a} \in \mathbb{N}^n \) be a vector satisfying the admissibility conditions. For each \( i \), let \( u_i \) and \( v_i \) be the values of \( \mathbf{a} \) at the endpoints of the block \( B_i \). Then \( u_i, v_i \le t-1 \). By definition, if \( |B_i| = 1 \), then \( u_i + v_i < t \), and if \( |C_i| = 1 \), then \( v_i + u_{i+1} \ge t \).

Conversely, suppose that integers \( u_i, v_i \in \{0, \ldots, t-1\} \) satisfy the stated conditions. We construct a vector \( \mathbf{a} \in \mathbb{N}^n \) as follows. Set \( a_{b_i} = u_i \) and \( a_{c_i} = v_i \). For all other vertices \( j \in V(B_i) \setminus \{b_i, c_i\} \), set \( a_j = 0 \). For vertices \( j \in V(C_i) \setminus \{c_i, b_{i+1}\} \), set \( a_j = t \). One checks directly that \( \mathbf{a} \) satisfies the required inequalities, and hence \( H \) is \( t \)-admissible.
\end{proof}

\begin{defn}
Let \( H \) be a \( t \)-admissible subgraph of \( C_n \). A tuple \( (\mathbf{u}, \mathbf{v}) \in \mathbb{N}^r \times \mathbb{N}^r \) satisfying the conditions of Lemma~\ref{lem_admissible_cycle} is called a \emph{certificate} of \( H \).
\end{defn}
\begin{defn}
A sequence \( \mathbf{b} = (b_1, \ldots, b_r) \) is said to be \( t \)-realizable for \( C_n \) if there exists a \( t \)-admissible subgraph \( H \) of \( C_n \) such that \( |B_i| = b_i \) for all \( i = 1, \ldots, r \).
\end{defn}

\begin{lem}\label{lem_red_1}
Assume that \( \mathbf{b} = (b_1, \ldots, b_r) \) is \( t \)-realizable. If \( b_i \ge 4 \), then
\[
\mathbf{b}' = (b_1, \ldots, b_{i-1}, 1, b_i - 3, b_{i+1}, \ldots, b_r)
\]
is also \( t \)-realizable.
\end{lem}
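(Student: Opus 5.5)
Since $\mathbf{b}$ is $t$-realizable, we fix a $t$-admissible subgraph $H$ of $C_n$ with blocks $B_1,\ldots,B_r$, $|B_j| = b_j$, and a certificate $(\mathbf{u},\mathbf{v})$ as in Lemma~\ref{lem_admissible_cycle}. The idea is to split the block $B_i$ internally. Suppose its vertices are $b_i, b_i+1, \ldots, c_i$, with $c_i - b_i = b_i' \ge 4$ edges (writing $b_i'$ for the length to avoid clashing with the vertex label). We remove from $E(H)$ the second and third edges of $B_i$, namely $\{b_i+1,b_i+2\}$ and $\{b_i+2,b_i+3\}$. The resulting subgraph $H'$ has the same blocks as $H$, except that $B_i$ is replaced by two blocks:
\begin{itemize}
\item $B_i^{(1)} = \{\{b_i,b_i+1\}\}$, of size $1$;
\item $B_i^{(2)}$, consisting of the edges on the interval $[b_i+3, c_i]$, of size $b_i'-3 \ge 1$.
\end{itemize}
Between them sits a new gap $C$ with $|C| = 2$. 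All other blocks and gaps are unchanged, so the length sequence of $H'$ is exactly $\mathbf{b}'$. It remains to show that $H'$ is $t$-admissible.

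We exhibit a certificate for $H'$ directly, using Lemma~\ref{lem_admissible_cycle}.
\begin{itemize}
\item For the new first block $B_i^{(1)}$, take $u = u_i$ and $v = 0$. Since $u_i \le t-1$, condition (1) holds: $u_i + 0 < t$.
\item For the new second block $B_i^{(2)}$, take $u = 0$ and $v = v_i$.
\item All other certificate entries are kept.
\end{itemize}
Now we check the conditions.
\begin{itemize}
\item The new gap has size $2$, so condition (2) imposes nothing there.
\item The gap $C_{i-1}$ preceding $B_i$ still sees $u_i$ as the start value of $B_i^{(1)}$, so its condition $v_{i-1}+u_i \ge t$, if required, persists.
\item The gap $C_i$ following $B_i$ sees $v_i$ as the end value of $B_i^{(2)}$, so that condition persists as well.
\item If $b_i' - 3 = 1$, condition (1) for $B_i^{(2)}$ requires $0 + v_i < t$, which holds because $v_i \le t-1$.
\end{itemize}
Hence $H'$ is $t$-admissible by Lemma~\ref{lem_admissible_cycle}.

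The main point needing care is checking that the gap structure is right: the removed edges must leave a gap of size exactly $2$, not $1$, so that no constraint of the form $v + u \ge t$ arises. Removing two consecutive edges guarantees this. The case $r = 1$ also needs checking: $B_i$'s neighbouring gap is then the single gap $C_1$ on both sides. Here, too, $u_1$ stays at the start of the chain and $v_1$ at its end, so the wrap-around condition $v_1 + u_1 \ge t$ is unaffected.

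Relabeling so that vertex $1$ starts a block is harmless, since $B_i^{(1)}$ starts at the same vertex as $B_i$. Alternatively, one can build the vector $\mathbf{a}$ explicitly by setting $a_{b_i+2} = t$ and $a_{b_i+1} = a_{b_i+3} = 0$, and check the defining inequalities directly.
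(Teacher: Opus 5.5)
Your proof is correct and is the paper's argument. You delete the second and third edges of $B_i$ to leave a gap of size $2$, and you use the same certificate: $(u_i,0)$ on the new singleton block, $(0,v_i)$ on the block of length $b_i-3$, and all other entries unchanged. Your closing alternative is also valid; in fact, simply raising $a_{b_i+2}$ to $t$ in any admissible vector for $H$ already suffices, because the only edges containing $b_i+2$ are the two removed ones.
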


\begin{proof}
Let \( H \) be a \( t \)-admissible subgraph of \( C_n \) with edge decomposition \( E(H) = B_1 \cup \cdots \cup B_r \). By Lemma~\ref{lem_admissible_cycle}, \( H \) admits a certificate \( (\mathbf{u}, \mathbf{v}) \in \mathbb{N}^r \times \mathbb{N}^r \).

Consider the subgraph \( H' \) of \( H \) with
\[
E(H') = B_1 \cup \cdots \cup B_{i-1} \cup B_i^1 \cup B_i^2 \cup B_{i+1} \cup \cdots \cup B_r,
\]
where \( B_i^1 \) consists of the first edge of \( B_i \) and \( B_i^2 \) consists of the last \( b_i - 3 \) edges of \( B_i \). Then
\[
E(C_n) \setminus E(H') = C_1 \cup \cdots \cup C_{i-1} \cup C_i' \cup C_{i+1} \cup \cdots \cup C_r,
\]
where \( C_i' \) consists of two edges removed from \( B_i \).

Define \( \mathbf{u}' \) and \( \mathbf{v}' \) by
\[
u'_j =
\begin{cases}
u_j & \text{for } j = 1, \ldots, i, \\
0 & \text{if } j = i+1, \\
u_{j-1} & \text{for } j = i+2, \ldots, r,
\end{cases}
\quad
v'_j =
\begin{cases}
v_j & \text{for } j = 1, \ldots, i-1, \\
0 & \text{if } j = i, \\
v_{j-1} & \text{for } j = i+1, \ldots, r.
\end{cases}
\]

Since \( |C_i'| = 2 \), it imposes no constraint on \( (\mathbf{u}', \mathbf{v}') \). The only new constraints arise from \( B_i^1 \) and \( B_i^2 \), and these are satisfied since \( v_i' = 0 \) and \( u_{i+1}' = 0 \). Hence, \( (\mathbf{u}', \mathbf{v}') \) is a certificate for \( H' \). By Lemma~\ref{lem_admissible_cycle}, \( H' \) is \( t \)-admissible.
\end{proof}

\begin{lem}\label{lem_red_2}
Assume that \( \mathbf{b} = (b_1, \ldots, b_r) \) is \( t \)-realizable. If \( 2 \le b_i \le 3 \), then
\[
\mathbf{b}' = (b_1, \ldots, b_{i-1}, 1, b_{i+1}, \ldots, b_r)
\]
is also \( t \)-realizable.
\end{lem}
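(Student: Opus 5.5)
Starting from a $t$-admissible subgraph $H$ of $C_n$ with $E(H)=B_1\cup\cdots\cup B_r$, $|B_i|=b_i\in\{2,3\}$, and a certificate $(\mathbf u,\mathbf v)$ from Lemma~\ref{lem_admissible_cycle}, I will build a subgraph $H'$ by shrinking $B_i$ to a single edge and check directly that it is again $t$-admissible with a modified certificate. Since $n$ must stay fixed, the removed edges of $B_i$ move into the gaps $C_{i-1}$ and $C_i$, which can only lengthen those gaps.

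Concretely, let $B_i$ have vertices $b_i, b_i+1,\ldots,c_i$. I will not keep the first edge of $B_i$, since then $C_i$ would gain edges but the new single-edge block could fail the constraint $u_i'+v_i'<t$ while $C_{i-1}$ might still have length one. Instead I will handle the length conditions on both sides.
\begin{itemize}
\item If $b_i=3$, take $B_i'$ to be the middle edge. Then $C_{i-1}$ and $C_i$ each gain one edge, so both have length at least $2$, and neither imposes any constraint. The new block is a single edge, so I need $u_i'+v_i'<t$; setting $u_i'=v_i'=0$ works. All other entries of the certificate are unchanged, and the constraints attached to $C_{i-1}$ and $C_i$ disappear.
\item If $b_i=2$, I keep one edge, say the second, so $C_{i-1}$ grows to length at least $2$ and its constraint vanishes. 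The gap $C_i$ is unchanged, so if $|C_i|=1$ I still need $v_i'+u_{i+1}\ge t$, together with $u_i'+v_i'<t$. I will take $u_i'=0$ and $v_i'=v_i$. Since $v_i\le t-1$, this gives $u_i'+v_i'<t$, and the gap condition is inherited from the original certificate.
\end{itemize}

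The point to verify is that the right vertex $c_i$ of $B_i'$ is the same vertex as in $B_i$, so $v_i$ legitimately remains the value there. This is only notation: a certificate is just a choice of integers, and Lemma~\ref{lem_admissible_cycle} rebuilds the vector $\mathbf a$. The one subtlety is the degenerate case $r=1$, where $C_{i-1}=C_i$. Here the case $b_i=3$ is immediate. For $b_i=2$, the single gap has length $n-2$ before and $n-1$ after, so I just need $n\ge 3$; the constraint then holds vacuously or follows as above. With the conditions of Lemma~\ref{lem_admissible_cycle} verified, $H'$ is $t$-admissible with block sizes $\mathbf b'$, which proves the lemma.
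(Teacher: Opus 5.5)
Your proof is correct and is essentially the paper's argument: shrink $B_i$ to a single edge, let the removed edges lengthen the adjacent gap(s) so that their constraints disappear, and keep the certificate except for zeroing the value at the newly created endpoint. The paper keeps the first edge in both cases, with $\mathbf{u}'=\mathbf{u}$ and $v_i'=0$; this gives $u_i'+v_i'=u_i\le t-1$ and inherits any constraint $v_{i-1}+u_i\ge t$ from $C_{i-1}$, so your stated worry about keeping the first edge is unfounded --- your $b_i=2$ case is exactly its mirror image, and the middle-edge choice for $b_i=3$ is a harmless but unnecessary variant.
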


\begin{proof}
Let \( H \) be a \( t \)-admissible subgraph of \( C_n \) with edge decomposition \( E(H) = B_1 \cup \cdots \cup B_r \). By Lemma~\ref{lem_admissible_cycle}, \( H \) admits a certificate \( (\mathbf{u}, \mathbf{v}) \in \mathbb{N}^r \times \mathbb{N}^r \).

Consider the subgraph \( H' \) of \( H \) with
\[
E(H') = B_1 \cup \cdots \cup B_{i-1} \cup B_i' \cup B_{i+1} \cup \cdots \cup B_r,
\]
where \( B_i' \) consists of the first edge of \( B_i \). Then
\[
E(C_n) \setminus E(H') = C_1 \cup \cdots \cup C_{i-1} \cup C_i' \cup C_{i+1} \cup \cdots \cup C_r,
\]
where \( C_i' \) consists of the remaining edges from \( B_i \) and \( C_i \).

Define \( \mathbf{u}' = \mathbf{u} \) and \( \mathbf{v}' \) by
\[
v'_j =
\begin{cases}
v_j & \text{if } j \neq i, \\
0 & \text{if } j = i.
\end{cases}
\]

Since \( |C_i'| \ge 2 \), it imposes no constraint on \( (\mathbf{u}', \mathbf{v}') \). The only new constraint comes from \( B_i' \), which is satisfied since \( u_i' + v_i' = u_i < t \). Hence, \( (\mathbf{u}', \mathbf{v}') \) is a certificate for \( H' \). By Lemma~\ref{lem_admissible_cycle}, \( H' \) is \( t \)-admissible.
\end{proof}
\begin{lem}\label{lem_realized_1}
Let \( H \) be a subgraph of \( C_n \) with
\[
E(H) = \{e_j, e_{j+2}, \ldots, e_{j+2k}\}.
\]
Then \( H \) is \( t \)-admissible provided that \( k \le t-1 \).
\end{lem}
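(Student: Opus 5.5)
The plan is to apply Lemma~\ref{lem_admissible_cycle} directly, by exhibiting an explicit certificate. In the notation of that lemma, $H$ has $r=k+1$ blocks $B_1,\ldots,B_{k+1}$, where $B_i=\{e_{j+2(i-1)}\}$, so every block has $|B_i|=1$. The gaps $C_1,\ldots,C_k$ each consist of the single edge $e_{j+2i-1}$, so $|C_i|=1$ for $i\le k$. The last gap $C_{k+1}$ runs from $e_{j+2k}$ back around to $e_j$ and contains $n-2k-1$ edges.

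I read the statement as implicitly assuming that the listed edges are distinct, that consecutive listed edges are non-adjacent, and that $e_{j+2k}$ and $e_j$ are also non-adjacent, so that $H$ really has $k+1$ separate single-edge blocks. This means $|C_{k+1}|\ge 2$, i.e.\ $n\ge 2k+3$. This assumption is genuinely necessary. If $n=2k+2$, then all gaps have size $1$. Summing the block conditions gives $\sum_i(u_i+v_i)<(k+1)t$, while summing the gap conditions gives $\sum_i(v_i+u_{i+1})\ge (k+1)t$. These two sums are equal, which is a contradiction. So in that degenerate case $H$ is not $t$-admissible, and I will state the hypothesis $n\ge 2k+3$ explicitly.

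Under this assumption, the lemma requires $u_i,v_i\in\{0,\ldots,t-1\}$ with $u_i+v_i<t$ for all $i$, and $v_i+u_{i+1}\ge t$ only for $i=1,\ldots,k$. The wrap-around pair $(v_{k+1},u_1)$ is unconstrained. I would take a staircase certificate
\[
u_i=i-1,\qquad v_i=t-i\qquad (i=1,\ldots,k+1).
\]
The range conditions hold because $0\le u_i\le k\le t-1$ and $t-k-1\le v_i\le t-1$ with $t-k-1\ge 0$; this is exactly where the hypothesis $k\le t-1$ is used. The block conditions hold since $u_i+v_i=t-1<t$. For $i\le k$ the gap conditions hold since $v_i+u_{i+1}=(t-i)+i=t$.

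Lemma~\ref{lem_admissible_cycle} then shows that $H$ is $t$-admissible. The only real subtlety is the bookkeeping of the wrap-around gap, namely confirming that $|C_{k+1}|\ge 2$, so that the final pair is free. This is where the staircase necessarily "breaks", and it is the step I expect to need care when relabeling so that $B_1$ starts at vertex $1$.
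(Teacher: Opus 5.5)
Your argument is correct and is exactly the paper's proof: the certificate $\mathbf{u}=(0,1,\ldots,k)$, $\mathbf{v}=(t-1,t-2,\ldots,t-k-1)$ plugged into Lemma~\ref{lem_admissible_cycle}. Your explicit condition $n\ge 2k+3$, which leaves the wrap-around pair $(v_{k+1},u_1)$ unconstrained, is something the paper's certificate silently relies on; your summation argument correctly shows some such hypothesis is needed, since for $n=2k+2$ the graph $H$ is a perfect matching of $C_n$ and is not $t$-admissible.
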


\begin{proof}
Let \( \mathbf{u} = (0,1,\ldots,k) \) and \( \mathbf{v} = (t-1,t-2,\ldots,t-k-1) \). Then \( (\mathbf{u}, \mathbf{v}) \) is a certificate for \( H \).
\end{proof}

\begin{lem}\label{lem_realized_2}
Let \( H_1 = (e_{i_1}, \ldots, e_{i_s}) \) and \( H_2 = (e_{j_1}, \ldots, e_{j_t}) \) be subgraphs of \( C_n \) such that
\[
e_{i_1} < \cdots < e_{i_s} < e_{j_1} < \cdots < e_{j_t},
\]
with \( j_1 - i_s \ge 3 \) and \( n + i_1 - j_t \ge 3 \). If both \( H_1 \) and \( H_2 \) are \( t \)-admissible, then \( H_1 \cup H_2 \) is \( t \)-admissible.
\end{lem}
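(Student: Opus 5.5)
The plan is to use the vector definition of admissibility directly and glue the two witnessing vectors. The gap conditions $j_1 - i_s \ge 3$ and $n + i_1 - j_t \ge 3$ leave a buffer vertex between the two pieces on each side, and on that vertex the glued vector can be set to $t$.

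Let $\mathbf a$ certify that $H_1$ is $t$-admissible and $\mathbf a'$ certify that $H_2$ is $t$-admissible, in the sense of the definition of $t$-admissibility. Write the edges as $e_k=\{k,k+1\}$, with indices taken mod $n$. The edges of $H_1$ use vertices in the interval $V_1=[i_1, i_s+1]$, and those of $H_2$ use vertices in $V_2=[j_1, j_t+1]$.

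Since $j_1 \ge i_s+3$, the vertices strictly between $i_s+1$ and $j_1$ include $i_s+2$. Likewise, since $n+i_1-j_t\ge 3$, the cyclic interval strictly between $j_t+1$ and $i_1$ (going around through $n$) contains at least one vertex. Call these two nonempty gap intervals $W$ and $W'$.

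Define $\mathbf c\in\mathbb N^n$ by
\[
c_k=a_k \ (k\in V_1),\qquad c_k=a'_k \ (k\in V_2),\qquad c_k=t \ (k\in W\cup W').
\]
Now check the two conditions.

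\begin{enumerate}
\item \emph{Edges of $H_1\cup H_2$.} Every such edge lies inside $V_1$ or inside $V_2$. Its weight under $\mathbf c$ therefore equals its weight under $\mathbf a$ or under $\mathbf a'$, which is $<t$.
\item \emph{Non-edges with both endpoints in $V_1$.} These are edges of $C_n$ not in $H_1$, so they have $\mathbf a$-weight $\ge t$. The same argument applies inside $V_2$ using $\mathbf a'$.
\item \emph{Remaining edges.} Every other edge of $C_n$ has at least one endpoint in $W\cup W'$. That endpoint has value $t$, so the edge has weight $\ge t$.
\end{enumerate}

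The main point to verify, and the only place the hypotheses enter, is that no edge joins $V_1$ directly to $V_2$. This would fail if the gap were only one edge, for example if $j_1=i_s+2$, since then $i_s+1$ and $j_1$ would be adjacent. With a gap of at least $3$ in the edge indices, the vertices $i_s+1$ and $j_1$ are at distance at least $2$, and likewise cyclically on the other side. So every edge between the blocks passes through a buffer vertex.

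A minor point: $\mathbf a$ and $\mathbf a'$ may be redefined arbitrarily outside $V_1$ and $V_2$, because only the restrictions to $V_1$ and $V_2$ are used. Hence $\mathbf c$ certifies that $H_1\cup H_2$ is $t$-admissible.

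One could instead phrase the argument through certificates as in Lemma~\ref{lem_admissible_cycle}. The new $C$-blocks between $H_1$ and $H_2$ have size $\ge 2$ and so impose no constraint. However, the direct vector gluing above avoids re-indexing.
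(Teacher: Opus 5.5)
Your proof is correct and is essentially the paper's argument. The paper concatenates the certificates $(\mathbf u_1,\mathbf v_1)$ and $(\mathbf u_2,\mathbf v_2)$ from Lemma~\ref{lem_admissible_cycle} and notes that the two new gaps contain at least two edges, so they impose no constraint; unpacked through that lemma's construction, where interior gap vertices receive the value $t$, this is exactly your gluing of $\mathbf a$ and $\mathbf a'$ with buffer vertices set to $t$. Working directly from the definition has the small advantage of showing explicitly where $j_1-i_s\ge 3$ and $n+i_1-j_t\ge 3$ are used, which the paper leaves to the phrase ``distance at least $2$''.
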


\begin{proof}
Let \( (\mathbf{u}_1, \mathbf{v}_1) \) and \( (\mathbf{u}_2, \mathbf{v}_2) \) be certificates of \( H_1 \) and \( H_2 \), respectively. Then \( \mathbf{u} = (\mathbf{u}_1, \mathbf{u}_2) \) and \( \mathbf{v} = (\mathbf{v}_1, \mathbf{v}_2) \) form a certificate for \( H_1 \cup H_2 \), since the distance between \( H_1 \) and \( H_2 \) is at least \( 2 \), so the new gaps impose no additional constraints.
\end{proof}

\begin{lem}\label{lem_realized}
Let \( t \ge 1 \) and \( n \ge 3 \) be integers, and set \( m = \left\lfloor \frac{tn}{2t+1} \right\rfloor \). Then the sequence \( \mathbf{b} = \mathbf{1}^m = (1, \ldots, 1) \in \mathbb{N}^m \) is \( t \)-realizable.
\end{lem}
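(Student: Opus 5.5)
The plan is to realize $\mathbf{1}^m$ by an explicit subgraph made of groups of alternating edges. Within a group consecutive chosen edges are separated by exactly one unused edge. Consecutive groups, including the last and the first around the cycle, are separated by at least two unused edges. Lemma~\ref{lem_realized_1} handles each group and Lemma~\ref{lem_realized_2} glues the groups together.

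\emph{Arithmetic.} Write $n = q(2t+1) + s$ with $0 \le s \le 2t$, and put $p = \lfloor ts/(2t+1) \rfloor$. Then
\[
m = \left\lfloor \frac{tn}{2t+1} \right\rfloor = qt + p .
\]
Since $s \le 2t$, we have $p \le ts/(2t+1) < t$, so $p \le t-1$. Also $p < s/2$, so $2p+1 \le s$ whenever $p \ge 1$.

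\emph{Construction.} Take $q$ \emph{full groups}, each of the form $\{e_j, e_{j+2}, \ldots, e_{j+2(t-1)}\}$ with $t$ edges. A full group spans $2t-1$ consecutive edges, and we follow it by $2$ unused edges, so it occupies a window of $2t+1$ edges. If $p \ge 1$, add one \emph{partial group} $\{e_j, e_{j+2}, \ldots, e_{j+2(p-1)}\}$, which spans $2p-1$ edges. Put it in the remaining window of $s$ edges, where it is followed by at least $s-(2p-1) \ge 2$ unused edges. The windows tile the cycle, so every gap between groups, including the wrap-around gap, contains at least $2$ edges. The total number of chosen edges is $qt+p = m$, and each is its own block, so all block sizes are $1$.

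\emph{Admissibility.} By Lemma~\ref{lem_realized_1}, with $k = t-1$ for full groups and $k = p-1 \le t-1$ for the partial group, each group is $t$-admissible on its own. One point needs care here. The certificate $\mathbf{u} = (0,\ldots,k)$, $\mathbf{v} = (t-1,\ldots,t-k-1)$ fails the condition of Lemma~\ref{lem_admissible_cycle} on the wrap-around gap if that gap has exactly one edge, because $v_{k+1} + u_1 = t-k-1 < t$. So we must check that each group alone leaves at least two unused edges around the cycle, i.e. $2k+3 \le n$. This holds because each group fits inside its window followed by at least two free edges. In the extreme case $q = 0$, we have $m = p \ge 1$; this is automatic since $t \ge 1$ and $n \ge 3$ give $tn/(2t+1) \ge 1$. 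Then the single group spans $2p-1 \le s-2 = n-2$ edges, so again $2p+1 \le n$.

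\emph{Gluing.} Apply Lemma~\ref{lem_realized_2} inductively, adding one group at a time. At each step the two pieces are separated on both sides by at least two unused edges, which is exactly the hypothesis $j_1 - i_s \ge 3$ and $n + i_1 - j_t \ge 3$. Equivalently, concatenating the certificates works, because gaps of size at least $2$ impose no constraint in Lemma~\ref{lem_admissible_cycle}. The resulting subgraph is $t$-admissible with block sequence $\mathbf{1}^m$, so $\mathbf{1}^m$ is $t$-realizable.

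The main obstacle is the bookkeeping in the arithmetic and construction steps: the wrap-around gap and the leftover window of $s$ edges. I would handle it exactly through the two inequalities $p \le t-1$ and $2p+1 \le s$ (for $p \ge 1$), together with the separate check of the degenerate case $q = 0$.
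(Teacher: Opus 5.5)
Your proof is correct and follows the paper's route: $q$ chains of $t$ alternating edges plus one shorter chain, pairwise separated by at least two free edges, each made admissible by Lemma~\ref{lem_realized_1} and then glued by Lemma~\ref{lem_realized_2}. The differences are only bookkeeping. You divide $n$ by $2t+1$ rather than writing $m = tq + r$ with $1 \le r \le t$, which removes the need for the paper's preliminary case $t \ge \lfloor n/2 \rfloor + 1$, and your explicit check that each chain leaves at least two free edges around the cycle supplies a hypothesis that Lemma~\ref{lem_realized_1} tacitly requires.
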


\begin{proof}
First, note that if \( t \ge \left\lfloor \frac{n}{2} \right\rfloor + 1 \), then \( m = \left\lfloor \frac{n}{2} \right\rfloor \), and \( \mathbf{1}^m \) is realizable by Lemma~\ref{lem_realized_1}. Hence, we may assume that \( t \le \left\lfloor \frac{n}{2} \right\rfloor \).

Write \( m = tq + r \) with \( 1 \le r \le t \). By Lemma~\ref{lem_realized_1}, a chain of \( t \) alternating edges of \( C_n \) is realizable. We now show that a stack consisting of \( q \) such chains together with one chain of \( r \) alternating edges is realizable. This configuration contains \( q(2t-1) + (2r-1) \) edges. By Lemma~\ref{lem_realized_2}, this configuration is realizable provided that the chains are separated by at least two edges. Thus, it suffices to show that
\[
q(2t-1) + (2r-1) + 2q + 2 \le n,
\]
which is equivalent to
\[
q(2t+1) + 2r + 1 \le n.
\]

By the definition of \( m \), we have
\[
n \ge \frac{m(2t+1)}{t} = \frac{(tq + r)(2t+1)}{t} = q(2t+1) + 2r + \frac{r}{t}.
\]
Since \( n \) is an integer, it follows that \( n \ge q(2t+1) + 2r + 1 \), as required.
\end{proof}

\begin{lem}\label{lem_realized_4}
Let $H$ be a subgraph of $C_n$ with $E(H) = \{e_j, e_{j+2}, \ldots, e_{j+2k}\}$. Assume that $H$ is $t$-admissible. Then $k \le t-1$.
\end{lem}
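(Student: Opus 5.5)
The plan is to use the certificate characterization of Lemma~\ref{lem_admissible_cycle} and show that the entries of a certificate must strictly increase along the chain of alternating edges. This forces $k \le t-1$, because every entry of a certificate lies in $\{0,\ldots,t-1\}$.

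Suppose $H$ is $t$-admissible with $E(H) = \{e_j, e_{j+2}, \ldots, e_{j+2k}\}$, and assume these $k+1$ edges are distinct. After relabeling, the block decomposition is $E(H) = B_1 \cup \cdots \cup B_{k+1}$ with $B_i = \{e_{j+2(i-1)}\}$, so $|B_i| = 1$ for every $i$. For $i = 1, \ldots, k$, the gap $C_i$ between $B_i$ and $B_{i+1}$ is the single edge $e_{j+2i-1}$, so $|C_i| = 1$. The wrap-around gap $C_{k+1}$ may have any length. If it also has length one (the case $n = 2(k+1)$), we only get an extra constraint, which we do not need.

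By Lemma~\ref{lem_admissible_cycle}, there is a certificate $(\mathbf{u}, \mathbf{v})$ with $u_i, v_i \in \{0, \ldots, t-1\}$. Condition (1) gives $u_i + v_i \le t-1$ for all $i = 1, \ldots, k+1$. Condition (2) gives $v_i + u_{i+1} \ge t$ for $i = 1, \ldots, k$. Combining the two for each $i \le k$:
\[
u_{i+1} \ge t - v_i \ge t - (t-1-u_i) = u_i + 1 .
\]
Iterating this inequality, we get $u_{k+1} \ge u_1 + k \ge k$. Since $u_{k+1} \le t-1$ (this also follows from $u_{k+1} + v_{k+1} \le t-1$ with $v_{k+1} \ge 0$), we conclude $k \le t-1$.

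The only delicate point is the bookkeeping that identifies the blocks and gaps correctly. In particular, each interior gap must consist of exactly one edge, so that condition (2) really applies, while the wrap-around gap is allowed to be arbitrary. Once this is set up, the chain of inequalities is immediate.
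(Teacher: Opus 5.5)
Your proof is correct and is essentially the paper's argument: take a certificate from Lemma~\ref{lem_admissible_cycle}, combine $u_i+v_i\le t-1$ with $v_i+u_{i+1}\ge t$ to get $u_{i+1}\ge u_i+1$, and compare $u_{k+1}\ge k$ with $u_{k+1}\le t-1$. Like the paper, your block bookkeeping tacitly assumes the wrap-around gap is nonempty, i.e.\ $n\ge 2k+2$ (for $n=2k+1$ the edges $e_{j+2k}$ and $e_j$ share a vertex and merge into one block); this is harmless, since in the proof of the main theorem each chain is separated from the rest by at least two edges.
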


\begin{proof}
Let $\mathbf{u}, \mathbf{v}$ be a certificate of $H$. Then we have
\[
u_i + v_i \le t-1 \quad \text{for } i = 1, \ldots, k+1,
\]
and
\[
v_i + u_{i+1} \ge t \quad \text{for } i = 1, \ldots, k.
\]
In particular, $u_{i+1} > u_i$ for all $i = 1, \ldots, k$. Since $u_1 \ge 0$ and $u_{k+1} \le t-1$, we deduce that $k \le t-1$.
\end{proof}
\begin{lem}\label{lem_inq_2}
Let \( t \ge 2 \) and \( n \ge 3 \) be integers. Then
\[
\left\lfloor \frac{n+1}{3} \right\rfloor \le \left\lfloor \frac{tn}{2t+1} \right\rfloor.
\]
\end{lem}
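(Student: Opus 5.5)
The plan is to reduce the inequality to a comparison of real numbers and then treat the few small residues by hand. The function $t \mapsto \frac{tn}{2t+1}$ is increasing in $t$, since $\frac{t}{2t+1} = \frac12 - \frac{1}{2(2t+1)}$. So $\left\lfloor \frac{tn}{2t+1} \right\rfloor \ge \left\lfloor \frac{2n}{5} \right\rfloor$ for all $t \ge 2$, and it suffices to prove
\[
\left\lfloor \frac{n+1}{3} \right\rfloor \le \left\lfloor \frac{2n}{5} \right\rfloor \quad \text{for all } n \ge 3.
\]

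For this, set $k = \left\lfloor \frac{n+1}{3} \right\rfloor$. Since the right-hand side is a floor, it is enough to show that the integer $k$ satisfies $k \le \frac{2n}{5}$, i.e.\ $5k \le 2n$.

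We have $3k \le n+1$, so $5k \le \frac{5(n+1)}{3}$. Moreover $\frac{5(n+1)}{3} \le 2n$ exactly when $5n + 5 \le 6n$, that is, when $n \ge 5$. This settles every $n \ge 5$.

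The remaining cases $n = 3$ and $n = 4$ are checked directly:
\begin{itemize}
\item for $n=3$: $\left\lfloor 4/3 \right\rfloor = 1 \le \left\lfloor 6/5 \right\rfloor = 1$;
\item for $n=4$: $\left\lfloor 5/3 \right\rfloor = 1 \le \left\lfloor 8/5 \right\rfloor = 1$.
\end{itemize}

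No step is a real obstacle. The only point needing care is that the crude bound $k \le \frac{n+1}{3}$ fails to give $5k \le 2n$ for small $n$, which is why $n = 3, 4$ are handled separately.
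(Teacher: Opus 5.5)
Your proof is correct, but it is organized differently from the paper's. The paper keeps $t$ general. It writes $n+1 = 3k + r$ with $0 \le r \le 2$ and computes $tn - k(2t+1) = k(t-1) + t(r-1)$. This is visibly nonnegative when $r \ge 1$. In the tight case $r = 0$ it equals $(t-1)(k-1) - 1$, which is nonnegative because $t \ge 2$ and $n \ge 3$ forces $k \ge 2$.

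You instead use the monotonicity of $t/(2t+1)$ to reduce to the extremal case $t = 2$. You then replace the exact residue decomposition by the cruder bound $3k \le n+1$. That bound yields $5k \le 2n$ only for $n \ge 5$, which is why you need the hand checks at $n = 3, 4$.

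Your route makes it transparent that $t = 2$ is the worst case, so the problem is really the single-parameter inequality $\lfloor (n+1)/3 \rfloor \le \lfloor 2n/5 \rfloor$. The paper's route needs no finite case check, since $n = 3, 4$ fall under $r \ge 1$. It also pinpoints exactly where $t \ge 2$ is needed: only when $3 \mid n+1$. This matches the fact that for $t = 1$ the inequality fails precisely at such $n$, e.g.\ $n = 5$.
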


\begin{proof}
Write \( n + 1 = 3k + r \) with \( 0 \le r \le 2 \). It suffices to show that \( k(2t + 1) \le tn \). Indeed, we compute
\[
t(3k + r - 1) - k(2t + 1) = tk - k + tr - t.
\]
If \( r > 0 \), then the right-hand side is nonnegative. If \( r = 0 \), then \( k \ge 2 \), and hence \( tk - k - t \ge 0 \). The conclusion follows.
\end{proof}

We are now ready to prove the main theorem.

\begin{proof}[Proof of Theorem~\ref{main}]
By Corollary~\ref{cor_cycle_depth} and Lemma~\ref{lem_reg_cycle}, it suffices to show that
\[
\max \{ \nu(H) \mid H \in \Adm_t^*(C_n) \} = \left\lfloor \frac{tn}{2t+1} \right\rfloor.
\]
By Lemmas~\ref{lem_red_1}, \ref{lem_red_2}, and \ref{lem_realized}, it is enough to consider the case where \( H \in \Adm_t^*(C_n) \) with
\[
E(H) = B_1 \cup \cdots \cup B_r
\quad \text{and} \quad |B_i| = 1 \text{ for all } i = 1, \ldots, r.
\]
We must show that \( r \le  \left\lfloor \frac{tn}{2t+1} \right\rfloor \).

As in the proof of Lemma~\ref{lem_realized}, we may assume that \( t \le \left\lfloor \frac{n}{2} \right\rfloor \). Decompose \( B_1 \cup \cdots \cup B_r \) into \( q \) chains of alternating edges, where each chain is separated from the next by at least two edges. Let \( \ell_1, \ldots, \ell_q \) denote the lengths of these chains. We have 
\[
\ell_1 + \cdots + \ell_q = r, \qquad 
\sum_{i=1}^q (2\ell_i - 1) + 2q \le n.
\]
By Lemma \ref{lem_realized_4}, \( \ell_i \le t \). Hence, \( qt \ge r \). Therefore,
\[
n \ge 2\sum_{i=1}^q \ell_i + q = 2r + q \ge 2r + \frac{r}{t}.
\]
Equivalently, \( r(2t + 1) \le tn \), which implies \( r \le \left\lfloor \frac{tn}{2t+1} \right\rfloor \), as desired.
\end{proof}

\begin{rem}
By~\cite[Corollary 3.6]{FHV}, we have \( \depth(S/J(C_n)^t) = 0 \) for all \( t \ge 2 \) when \( n \) is odd. When \( n \) is even, it is known that \( J(C_n)^t = J(C_n)^{(t)} \). Hence, Theorem~\ref{main} also determines the depth of powers of the cover ideal \( J(C_n) \).
\end{rem}

\begin{rem}
The study of the depth of symbolic powers of cover ideals of graphs is closely related to the study of ordered matchings in graphs. We refer to~\cite{CV, BHHT, HKTT} for further information.
\end{rem}

  \section{Acknowledgment} 
This research was funded by the TNU-University of Sciences under the project code: CS2026-TN06-04. 

\vspace{1cm}
\noindent {\bf Data Availability} Data sharing is not applicable to this article as no datasets were generated or analyzed during the current study.

\noindent {\bf Conflict of interest} There are no competing interests of either financial or personal nature.

\end{document}